\documentclass[12pt]{article}
\usepackage{amsmath, amssymb,amsthm}%

\newtheorem{theorem}{Theorem}

\theoremstyle{definition}
\newtheorem*{defen}{Definition}
\newtheorem*{zam}{Remarks}
\newtheorem*{remark}{Remark}

\newcommand{\W}{W^{\mathrm{reg}}}

\textheight=250mm %
\textwidth=180mm %
\oddsidemargin=-10.4mm%
\evensidemargin=-10.4mm %
\topmargin=-20.4mm

\newcommand{\vol}{\mathop{\mathrm{vol}}}

\begin{document}

\vspace{10pt}

\begin{center}
\large \textsc{Angle measures of some cones associated with
finite reflection groups}

\vspace{5pt}

P.\,V.\,Bibikov\footnote{The first author was partially supported by the Moebius Contest
Foundation for Young
Scientists}, V.\,S.\,Zhgoon\footnote{The second author was partially supported by the following grants RFBR 09-01-00287, 09-01-12169.}
\end{center}

\vspace{10pt}

Let $V$ be a Euclidean vector space of dimension $n$ with the
inner product $(\cdot,\cdot)$.
For a convex polyhedral cone $C$ we denote by $C^\circ$  its open kernel,
by $\langle C \rangle$ its linear span,
and by  $C^*$  its \emph{dual cone}, i.e. $C^*=\{v\in
V:\, (v,u)\geqslant 0 \;\forall \, u\in C\}$.
Denote by $\sigma(C)$ \emph{the relative angle
measure of $C$}, i.e. $\sigma(C)=\vol(C\cap B)/\vol (B)$, where
$B\subset \langle C \rangle$ is the unit ball centered at the
origin.
Let $F$ be a $k$-dimensional face of  some solid cone  $C$.
By $C/F$ we denote the orthogonal projection of  $C$  to the  subspace $\langle F \rangle^\bot$.
We note that $(C/F)^*$ is  an $(n-k)$-dimensional
face of  $C^*$ that is orthogonal to $F$.

Let now $W\subset O(V)$ be a  finite reflection group.
 Set $W^k=\{w\in W:\dim\ker(1-w)=k\}$ and denote $\W:=W^0$. For a subspace $U\subset V$ we denote by $W_U$ the subgroup of $W$
 that fixes $U$ pointwise.

 The aim of this paper is to prove the following theorem
conjectured by the first author in~\cite{BiUMN}.

\begin{theorem}\label{th.obch.} For a fundamental cone $C$ of a finite reflection group $W$ and for each $k=0,\ldots,n$ we have
$$\sum\limits_{{F\subset C,\ \dim
F=k}}\sigma(F)\cdot\sigma((C/F)^*)=|W^{k}|/|W|,$$
where $F$ runs over the $k$-dimensional faces of $C$.
\end{theorem}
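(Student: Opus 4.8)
The plan is to read the left-hand side as a conic intrinsic volume of the chamber $C$, to evaluate it by summing over all chambers of the reflection arrangement and regrouping by the flats (intersection subspaces) of the arrangement, and then to close everything by an induction on the rank that reduces to the case $k=0$ for parabolic subgroups.

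First I would recall the metric-projection (Moreau) decomposition of $V$ attached to the polyhedral cone $C$. For each face $F$ of $C$ the normal cone $N(F)=\{y\in\langle F\rangle^\bot:(y,u)\le 0\ \forall u\in C\}$ equals $-(C/F)^*$, and the sets $F^\circ\oplus N(F)$ partition $V$ up to a null set. Since $F\subset\langle F\rangle$ and $N(F)\subset\langle F\rangle^\bot$ are orthogonal and together span $V$, the relative angle measure is multiplicative over this orthogonal sum (most cleanly seen by writing $\sigma(K)$ as the Gaussian measure of $K$ inside $\langle K\rangle$, which factorises over orthogonal subspaces), so $\sigma\bigl(F^\circ\oplus N(F)\bigr)=\sigma(F)\,\sigma((C/F)^*)$. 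Writing $v_k(C)$ for the left-hand side, this shows $v_k(C)=\sigma(\Sigma_k)$, where $\Sigma_k$ is the union of those $F^\circ\oplus N(F)$ with $\dim F=k$; equivalently $v_k(C)$ is the probability that the metric projection of a random point onto $C$ lands in a $k$-dimensional face. As $\sigma$ is $O(V)$-invariant, $v_k(wC)=v_k(C)$ for every $w$, whence
\[
|W|\cdot v_k(C)=\sum_{\text{chambers }D}\ \sum_{\substack{F'\ \text{face of }D\\ \dim F'=k}}\sigma(F')\,\sigma\bigl((D/F')^*\bigr)=:T_k ,
\]
a sum over all $k$-dimensional faces of all chambers of the arrangement.

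Next I would regroup $T_k$ according to the flat $L=\langle F'\rangle$, a $k$-dimensional intersection of reflecting hyperplanes. The $k$-faces with $\langle F'\rangle=L$ are exactly the chambers of the arrangement induced on $L$ by the hyperplanes not containing $L$; they tile $L$, so $\sum_{\langle F'\rangle=L}\sigma(F')=1$. The structural point is that the hyperplanes through such an $F'$ are precisely those containing $L$, whose reflections generate the parabolic $W_L$; hence the transverse arrangement at every such $F'$ is the reflection arrangement of $W_L$ acting essentially on $\langle L\rangle^\bot$, the chambers $D\supset F'$ correspond bijectively to the chambers $\gamma$ of $W_L$, and $(D/F')^*=\gamma^*$. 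Thus the $L$-block of $T_k$ factorises as $\bigl(\sum_{\langle F'\rangle=L}\sigma(F')\bigr)\bigl(\sum_\gamma\sigma(\gamma^*)\bigr)=1\cdot|W_L|\,\sigma(\gamma_0^*)=|W_L|\,v_0(W_L)$, where $\gamma_0$ is the fundamental chamber of $W_L$.

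Finally I would close the argument by induction on $n=\dim V$. For a flat $L$ of dimension $k\ge 1$ the group $W_L$ acts essentially on $\langle L\rangle^\bot$ of dimension $n-k<n$, so by the inductive hypothesis $v_0(W_L)=|W_L^{\mathrm{reg}}|/|W_L|$, giving $|W_L|\,v_0(W_L)=|W_L^{\mathrm{reg}}|$. Combined with the elementary decomposition $|W^{k}|=\sum_{\dim L=k}|W_L^{\mathrm{reg}}|$ (each $w$ is counted at $L=\ker(1-w)$, where $w\in W_L$ is fixed-point-free on $\langle L\rangle^\bot$), this yields $T_k=\sum_{\dim L=k}|W_L^{\mathrm{reg}}|=|W^k|$, hence $v_k(C)=|W^k|/|W|$ for every $k\ge 1$. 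The remaining case $k=0$ then follows for free from the normalisation $\sum_{k=0}^n v_k(C)=1$ (the pieces $F^\circ\oplus N(F)$ cover $V$) together with $\sum_{k=0}^n|W^k|=|W|$, which forces $v_0(C)=|W^0|/|W|=|\W|/|W|$. The main obstacle is the structural step of identifying the transverse arrangement at a $k$-face with the reflection arrangement of $W_L$ and verifying that the angle measures genuinely factorise over the orthogonal splitting $V=\langle L\rangle\oplus\langle L\rangle^\bot$; once this local picture and the multiplicativity of $\sigma$ are secured, the regrouping and the induction are routine.
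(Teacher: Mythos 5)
Your argument is correct, and its combinatorial core coincides with the paper's: both multiply by $|W|$ to sum over all chambers, regroup the resulting sum by the flat $L=\langle F\rangle$, use that the $k$-faces spanning $L$ tile $L$ (total measure $1$), and use that the chambers containing a fixed face $F$ are permuted simply transitively by the parabolic subgroup $W_L$ generated by the reflections fixing $L$. The genuine difference is how the local factor $\sum_{D\supset F}\sigma\bigl((D/F)^*\bigr)=|\W_L|$ is obtained. The paper imports it as the De Concini--Procesi ``Curious Identity'' $\sigma(\gamma_0^*)=|\W_L|/|W_L|$, which it in turn deduces from Waldspurger's covering theorem. You instead derive it by induction on the rank: for $k\ge 1$ the groups $W_L$ act essentially in dimension $n-k<n$, and the remaining case $k=0$ --- which is precisely the Curious Identity for $W$ itself --- is recovered from the two normalisations $\sum_k v_k(C)=1$ and $\sum_k|W^k|=|W|$. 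The first of these is supplied by your Moreau/normal-cone decomposition $V=\bigsqcup_F\bigl(F^\circ\oplus N(F)\bigr)$ together with the multiplicativity of $\sigma$ over orthogonal sums (cleanly justified via Gaussian measure); neither ingredient appears in the paper. The net effect is a self-contained proof that needs neither Waldspurger's theorem nor the Curious Identity as external inputs --- indeed it reproves the latter along the way --- at the cost of the extra convex-geometric bookkeeping of conic intrinsic volumes and a rank induction; the paper's route is shorter once those two theorems are granted. Both arguments rest on the same structural facts (Steinberg's theorem that $W_L$ is generated by the reflections through $L$ and that $\ker(1-w)$ is always a flat), and your identity $|W^k|=\sum_{\dim L=k}|\W_L|$ is exactly the bookkeeping the paper performs when it concludes $S=\sum_U|\W_U|=|W^k|$.
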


The following two results are crucial for the proof of Theorem \ref{th.obch.}.
The first one is the
  fundamental result of Waldspurger~\cite{Wald}, for the simplest proof of which  we
refer the reader to~\cite{BiZhUMN,
BiZhIAN}.
 The second one is  the so-called ``Curious
Identity'' of De Concini and Procesi \cite{ConPro} (see also \cite{ BiUMN, Den}).

\begin{theorem}[Walsdpurger]\label{W}
 $C^*=\bigsqcup\limits_{w\in W}(1-w)C^\circ$.
\end{theorem}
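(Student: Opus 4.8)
The plan is to translate the statement into the language of polyhedral fans and then verify the fan axioms. Normalize the fundamental cone as $C=\{v:(v,\alpha_i)\ge0,\ i=1,\dots,n\}$, where $\alpha_1,\dots,\alpha_n$ are the simple roots of $W$, so that $C$ is a simplicial cone with facets $F_i=C\cap\alpha_i^\perp$ and $C^*=\mathrm{cone}(\alpha_1,\dots,\alpha_n)$. Since the relative interior commutes with linear maps, $(1-w)C^\circ=\mathrm{relint}\big((1-w)C\big)$, and $\dim(1-w)C=n-\dim V^w$, so the full-dimensional cones among the $(1-w)C$ are exactly those with $w\in\W$. Because the relative interiors of the cones of a fan partition its support, the theorem is equivalent to the assertion that the family $\Sigma=\{(1-w)C:w\in W\}$ is a fan with support $C^*$ and that $w\mapsto(1-w)C$ is a bijection of $W$ onto the set of cones of $\Sigma$.

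First I would prove the inclusion $(1-w)C\subseteq C^*$ for every $w$, i.e. $c-wc\in\mathrm{cone}(\alpha_1,\dots,\alpha_n)$ for $c\in C$. Writing $w=w's_i$ with $\ell(w)=\ell(w')+1$ gives $c-wc=(c-w'c)+(c,\alpha_i^\vee)\,w'\alpha_i$; here $c-w'c\in C^*$ by induction on $\ell(w)$, $(c,\alpha_i^\vee)\ge0$ because $c\in C$, and $w'\alpha_i\in\Phi^+$ since $\ell(w's_i)>\ell(w')$. Hence $\bigcup_{w\in W}(1-w)C^\circ\subseteq C^*$, which is the easy half of the desired equality.

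The geometric engine is the facet identity: since the reflection $s_i$ fixes the wall $F_i\subseteq\alpha_i^\perp$ pointwise, for every $w$ one has $(1-ws_i)f=f-wf=(1-w)f$ for $f\in F_i$, so that $(1-w)F_i=(1-ws_i)F_i$. Thus the common face $(1-w)F_i$ is shared by $(1-w)C$ and $(1-ws_i)C$; this identifies the facets of the cones $(1-w)C$ and organizes the wall-adjacencies $w\leftrightarrow ws_i$. Combined with the simpliciality of $C$, it is the mechanism by which one shows that $\Sigma$ is closed under passage to faces.

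It remains to prove that $\Sigma$ is a genuine fan of support exactly $C^*$ and that the indexing is bijective; this global assembly is where I expect the real difficulty. The disjointness of the relative interiors reduces, via the facet identity, to a local wall-crossing statement, using that $wc=w'c$ for regular $c\in C^\circ$ forces $w=w'$ (the stabilizer of a regular vector being trivial). For the covering I would argue by induction on the rank: each proper face of $C^*$ is a cone $\mathrm{cone}(\alpha_i:i\in I)$, the dual cone of the parabolic subgroup $W_I=\langle s_i:i\in I\rangle$ acting on its span, and the inductive hypothesis furnishes its decomposition; the top-dimensional region is then filled by the cones $(1-w)C$ with $w\in\W$, the consistency of this filling with the boundary being forced by the facet identity together with a connectedness (degree) argument. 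Equivalently, solvability of $(1-w)c=x$ for a given $x\in C^*$ can be sought as a fixed point of the self-map $c\mapsto\mathrm{dom}(c-x)$ of $C$, where $\mathrm{dom}$ sends a vector to its dominant $W$-representative. Carrying out this fan-assembly rigorously — simultaneously establishing disjointness, full covering, and the bijection $w\leftrightarrow(1-w)C$ — is the main obstacle, and the facet identity is the key tool that makes it tractable.
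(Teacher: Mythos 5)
You should first note that the paper does not actually prove this theorem: it quotes Waldspurger's result and refers to \cite{Wald} and to \cite{BiZhUMN, BiZhIAN} for proofs, so there is no internal argument to compare against; your attempt must therefore stand on its own, and it does not. What you establish rigorously is only the easy half: the inclusion $(1-w)C\subseteq C^*$ by induction on length (the identity $c-wc=(c-w'c)+(c,\alpha_i^\vee)\,w'\alpha_i$ with $w'\alpha_i\in\Phi^+$ is correct), plus the facet identity $(1-w)F_i=(1-ws_i)F_i$. The heart of the theorem --- that the sets $(1-w)C^\circ$ are \emph{pairwise disjoint} and \emph{cover} $C^*$ --- is exactly what you defer, and the reductions you sketch for it are flawed. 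Disjointness is not a stabilizer statement: one must rule out $(1-w)c=(1-w')c'$ with $w\neq w'$ and \emph{distinct} points $c,c'\in C^\circ$, where the linear maps $1-w$ and $1-w'$ are different (and not even injective for non-regular $w$); the fact that regular vectors have trivial stabilizer says nothing about this. Likewise your covering induction on the faces $\mathrm{cone}(\alpha_i:i\in I)$ of $C^*$ cannot account for the lower-dimensional cells: for non-regular $w$ the cone $(1-w)C^\circ$ typically lies in the \emph{interior} of $C^*$, not on its boundary (already in type $A_2$ the ray $\mathbb{R}_{>0}(\alpha_1+\alpha_2)$ coming from the reflection in the highest root, and in $B_2$ two of the four reflection rays, sit inside $(C^*)^\circ$), so the boundary induction plus ``fill the top-dimensional region with $w\in\W$'' does not even enumerate all the cells that must fit together.

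There is also a structural misstep in the framing: the theorem is \emph{not} equivalent to $\Sigma=\{(1-w)C\}$ being a fan. A fan with support $C^*$ and bijective indexing would imply the partition, but the converse fails in general --- a finite family of cones whose relative interiors partition a convex cone need not be face-to-face, and fan-ness additionally requires every face of every $(1-w)C$ to again be of the form $(1-w')C$, which is a stronger assertion than Waldspurger's statement and is nowhere verified in your sketch. So you have replaced the theorem by a possibly stronger claim and then left its proof to an unspecified ``connectedness (degree) argument'' and an unanalyzed fixed-point map $c\mapsto\mathrm{dom}(c-x)$ (for which existence of a fixed point, and why a fixed point solves $(1-w)c=x$ with the \emph{correct, unique} $w$, are not addressed). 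You candidly flag the global assembly as the main obstacle, and indeed it is: what you have is a correct warm-up (one inclusion and the wall-adjacency mechanism) but not a proof, and the missing part is precisely the content for which the paper cites \cite{Wald} and the simplified arguments of \cite{BiZhUMN, BiZhIAN}.
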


\begin{theorem}[Curious Identity]\label{DC}
$\sigma(C^*)=|\W|/|W|.$
\end{theorem}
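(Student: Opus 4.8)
The plan is to deduce the identity from Waldspurger's decomposition (Theorem~\ref{W}) by a Fubini-type double count over the chambers of $W$. First I would record that $\sigma$ is the normalization of the Lebesgue measure carried by the unit sphere of $\langle C\rangle$, hence is finitely additive: if a solid cone is the union of finitely many subcones with pairwise disjoint interiors, its angle measure is the sum of theirs. Applying this to Theorem~\ref{W} gives $\sigma(C^*)=\sum_{w\in W}\sigma\big((1-w)C^\circ\big)$. Now $\W=W^0$ is exactly the set of $w$ for which $1-w$ is invertible; for $w\notin W^0$ the cone $(1-w)C^\circ$ lies in the proper subspace $\Im(1-w)$ and so has zero angle measure. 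Hence the sum collapses to $\sigma(C^*)=\sum_{w\in W^0}\sigma\big((1-w)C^\circ\big)$.

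At this point the naive hope is that every summand equals $\sigma(C)=1/|W|$, and this is what I expect to be the main obstacle, because it is \emph{false}: the map $1-w$ is generally not a similarity (for a rotation--reflection $w$ its singular values differ across the eigenspaces of $w$), so it distorts angle measures and $\sigma\big((1-w)C^\circ\big)\neq\sigma(C)$ in general. The resolution I propose is to avoid evaluating the summands individually and instead sum over all chambers simultaneously. For $u\in W$ and $w\in W^0$ set $a(u,w):=\sigma\big((1-w)\,uC^\circ\big)$, an array indexed by the $|W|$ chambers $uC$ and the $|W^0|$ elements of $W^0$, and I would compute its total in two ways.

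Summing $a(u,w)$ first over $u$ with $w$ fixed: since $1-w$ is a linear isomorphism and the open chambers $\{uC^\circ\}_{u\in W}$ tile $V$ up to a set of measure zero, their images $\{(1-w)uC^\circ\}_{u\in W}$ tile $V$ as well, whence $\sum_{u\in W}a(u,w)=\sigma(V)=1$ for each $w$; summing over $W^0$ gives total $|W^0|$. Summing instead first over $w$ with $u$ fixed: applying $u$ to Theorem~\ref{W} (equivalently, conjugating the decomposition by $u$ and using that $W^0$ is closed under conjugation) yields $(uC)^*=\bigsqcup_{w\in W}(1-w)(uC)^\circ$, so $\sum_{w\in W^0}a(u,w)=\sigma\big((uC)^*\big)=\sigma(uC^*)=\sigma(C^*)$, the last equalities because $u\in O(V)$ preserves both duality and angle measure; summing over the $|W|$ chambers gives total $|W|\cdot\sigma(C^*)$. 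Equating the two totals yields $|W|\cdot\sigma(C^*)=|W^0|=|\W|$, that is $\sigma(C^*)=|\W|/|W|$, as claimed. The crux is to realize that the summands cannot be treated one at a time: instead, Waldspurger applied to every chamber exhibits $\sigma(C^*)$ as each row sum of the array, while the actual value of $\sigma(C^*)$ is pinned down only by the transverse (column) count, in which each fixed invertible $1-w$ spreads the entire tiling of $V$ and hence contributes total angle $1$.
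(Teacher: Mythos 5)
Your proof is correct and is essentially the paper's own argument: your two-way count of the array $a(u,w)=\sigma\big((1-w)uC^\circ\big)$ is precisely the paper's chain $|W|\sigma(C^*)=\sum_{w\in W}\sigma(wC^*)=\sum_{w'\in \W}\sum_{w\in W}\sigma((1-w')wC)=|\W|$, resting on the same two ingredients, namely Waldspurger's decomposition applied to every chamber $uC$ and the fact that each invertible $1-w'$ carries the chamber tiling of $V$ to another tiling of $V$. The only difference is expository: you make explicit the discarding of the non-regular terms (whose images lie in the proper subspace $\Im(1-w)$ and so have zero angle measure) and the conjugation-invariance of $W^0$, both of which the paper leaves implicit.
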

\begin{proof}[Proof of Theorem 3]
 Theorem \ref{W} implies that
$wC^*=\bigcup_{w'\in \W}(1-w')wC.$
 Thus we obtain
$$
|W|\sigma(C^*)=\sum_{w\in W}\sigma(wC^*)=\sum_{ w'\in
\W}\sum_{w\in W}\sigma((1-w')wC)=\sum_{w'\in \W}\sigma((1-w')V)
=|\W|.
$$
\end{proof}

\begin{remark} It is easy to see from the previous proof that the number of cones $wC^*$ covering a
generic point of $V$ is equal to $|\W|$.
\end{remark}

\begin{proof}[Proof of Theorem~\ref{th.obch.}]
Consider the sum $S:=\sum_{(C,F)}\sigma(F)\cdot\sigma((C/F)^*)$
over all  pairs $(C,F)$, where $C$ is a fundamental cone of $W$ and
$F$ is a $k$-dimensional face of $C$.
 This sum is equal to
the left hand side of the required formula multiplied by $|W|$. We shall calculate $S$ in a different way.

 Let us recall that
any two fundamental cones of $W$ with a common face $F$ are conjugate
by a unique element of the reflection subgroup $W_F$ that fixes $F$
pointwise. We also note that $C/F$ is a fundamental cone for the action of $W_F$  on $\langle F \rangle^\bot$.
By Theorem \ref{DC} we have $\sigma((C/F)^*)=|\W_F|/|W_F|$. If we take the sum of $\sigma((C/F)^*)$
over all $C$ that contain a fixed face $F$, we get $|\W_F|$.    The group $W_F$ and the measure
 $\sigma((C/F)^*)$ depend only on the subspace $U:=\langle F \rangle$ that is an intersection of reflection
hyperplanes.
Let us take the sum of $\sigma(F)\cdot\sigma((C/F)^*)$ over all pairs $(C,F)$ such that $F\subset U$
for a fixed $k$-dimensional space $U$. We get $|\W_U|$ multiplied by the total measure of faces $F\subset U$,
which is equal to one, since the faces $F$ decompose $U$. Taking the sum over all  subspaces $U$ we obtain
$S=|W^k|$.
\end{proof}

\begin{zam}
1. Given a cone $C$ and its face $F$,
 we define the cone $F\oplus (C/F)^*$.
It follows from the previous proof  that for  any $k$-dimensional
subspace $U$,
 which is an intersection of reflection hyperplanes,
  a generic point of $V$ is covered by $|\W_U|$ cones  $F\oplus (C/F)^*$ with $F\subset U$.

2. The sums from  Theorem~\ref{th.obch.} can be expressed
in terms of the exponents $m_1,\ldots,m_n$ of $W$ with a help of the Solomon formula \cite{Sol}:
$\sum\limits_{k=0}^n|W^{n-k}| t^k=\prod\limits_{k=0}^n(1+m_kt)$.
\end{zam}

\begin{defen} Let $C$ be a fundamental cone of $W$.
We say that two $k$-dimensional faces
$F$ and $F'$ of $C$ are equivalent if there exists
an element $w\in W$ such that $w\langle F\rangle=\langle
F'\rangle$.
\end{defen}

Denote by $N_F$ the subgroup of $W$ normalizing
 $\langle F\rangle$.
 Consider the sum $(|W|/|W_{F}|)\cdot\sum_{F'\sim
F}\sigma(F')$ of relative angle measures for the $W$-translates
 of the $k$-dimensional faces $F'\subset C$  that are equivalent
to $F$. It is the same as the total measure of the $W$-translates of $F'\subset\langle F\rangle$.
 The latter is equal to $|W|/|N_F|$ multiplied by the total measure of the faces $F'\subset \langle F\rangle$
 which is equal to $1$. Thus we get $\sum_{F'\sim
F}\sigma(F')=|W_F|/|N_F|$.
 Using Theorems \ref{th.obch.}, \ref{DC} and applying
the previous equality we get:
$$\sum_{F}|\W_F|/|N_F|=\sum_F\Big(\sum\limits_{F'\sim
F}\sigma(F')\Big)\cdot\sigma((C/F)^*)=|W^{k}|/|W|,$$
where the first  sum is taken over representatives of all equivalence classes of  $k$-dimensional faces $F \subset C$.

The authors are grateful to E.\,B.\,Vinberg  for valuable discussions.

\end{document}